\newtheorem{lemma}{Lemma}
\newtheorem{proposition}{Proposition}
\newtheorem{theorem}{Theorem}
  \DeclareFontFamily{U}{wncy}{}
    \DeclareFontShape{U}{wncy}{m}{n}{<->wncyr10}{}
    \DeclareSymbolFont{mcy}{U}{wncy}{m}{n}
    \DeclareMathSymbol{\Sh}{\mathord}{mcy}{"58}
\begin{document}

\title[Divisibility of character values of the symmetric group]{Almost all entries in the character table of the symmetric group are multiples of any given prime} 
\author{Sarah Peluse} 
\address{School of Mathematics, Institute for Advanced Study, Princeton, NJ 08540, USA}
\address{Department of Mathematics, Princeton University, Princeton, NJ 08540, USA}
\email{speluse@princeton.edu}

\author{Kannan Soundararajan} 
\address{Department of Mathematics, Stanford University, Stanford, CA 94305, USA}
\email{ksound@stanford.edu}

\maketitle
\begin{abstract}
We show that almost every entry in the character table of $S_N$ is divisible by any fixed prime as $N\to\infty$. This proves a conjecture of Miller.
\end{abstract}

\section{Introduction}\label{sec1}

In~\cite{Miller2019}, Miller computed the character table of $S_N$ for all $N$ up to $38$, and noticed that the proportion of entries not divisible by $2$, $3$, or $5$ seemed to tend to zero. Based on this, he conjectured that, for every fixed prime $q$, almost every entry in the character table of $S_N$ is divisible by $q$ as $N\to\infty$. It has been known for a long time, due to work of McKay~\cite{McKay1972}, that almost every character of $S_N$ has even degree. Recently, Gluck~\cite{Gluck2019} showed that the proportion of odd entries in a sparse but infinite set of columns of the character table tends to zero (see also the results of Malik, Stan, and Zaharescu~\cite{MalikStanZaharescu2014} on zeros in certain columns of the character table), and Morotti~\cite{Morotti2020} made further progress on Miller's conjecture for each fixed prime $q$. Even more recently, the first author~\cite{Peluse2020} proved Miller's conjecture for $q=2,3,5,7,11,$ and $13$.

In this paper, we completely resolve Miller's conjecture, proving it for all primes. We also give an explicit bound, uniform in $q$ almost up to $\log{N}$, for the number of entries in the character table not divisible by $q$.

\begin{theorem} 
\label{thm1}   Let $p(N)$ denote the number of unrestricted partitions of $N$, so that $p(N)^2$ denotes the number entries in the character table for $S_N$.  
Let $N$ be large, and let $q$ be a prime with $q \le (\log N)/(\log \log N)^2$.  The number of entries in the character table of $S_N$ that are not divisible by $q$ is at most 
\[
O\Big(p(N)^2 N^{-\frac{1}{12q}} \Big). 
\]
In particular, almost all entries in the character table for $S_N$ are multiples of 
\[
\prod_{q\le (\log N)/(\log \log N)^2} q. 
\]
\end{theorem} 

Miller~\cite{Miller2019} also computed the density of entries in the character table of $S_N$ divisible by $4,8,9,25,27$, and $125$, as well as the density of zeros in the character table. From this, it looks like the density of entries divisible by any fixed prime power may go to $1$, while the density of zeros may be approaching a positive constant less than $1$. Our arguments do not apply to the problem of divisibility by higher prime powers.  Regarding the number of zeros in the character table, 
Proposition~\ref{prop1} below combined with the distribution of the largest part of a random partition yields that at least a proportion $C/\log{N}$ 
(for some positive constant $C$) of the character values must be zero, and it is unclear whether a positive proportion of the entries are zero. 
However, in the related setting of finite simple groups of Lie type with rank going to infinity, Larsen and Miller~\cite{LarsenMiller2020} have shown that almost every entry of the character table is zero. 

When $\chi$ is chosen uniformly at random from the set of irreducible characters of $S_N$ and $\sigma$ is a uniformly random permutation, Miller~\cite{Miller2014} showed that $\chi(\sigma)$ almost always vanishes.  Another natural variant is to choose the character $\chi$ randomly according to the Plancherel measure (which assigns to the irreducible representation $\rho$ the weight $\text{dim} (\rho)^2/N!$).  If a conjugacy class $C$ of $S_N$ is chosen at random (with uniform measure from the $p(N)$ possibilities) then $\chi(C) =0$ almost always.  We give a brief indication of these results in \S 3. 

\medskip

\noindent {\bf Acknowledgments.}    The first author is partially supported by the NSF Mathematical Sciences Postdoctoral Research Fellowship Program under Grant No. DMS-1903038 and by the Oswald Veblen Fund.  The second author is partially supported by a grant from the National Science Foundation, and a Simons Investigator Grant from the Simons Foundation.  We thank the referees for their careful reading.  

\section{Plan of the proof}\label{sec2}
For any two partitions $\lambda$ and $\mu$ of $N$, let $\chi_\mu^{\lambda}$ denote the value of the character of $S_N$ corresponding to the partition $\lambda$ on the congruence class of permutations with cycle type $\mu$. The basic idea of the proof of Theorem~\ref{thm1} is the same as that used in~\cite{Peluse2020} to prove Miller's conjecture for $q\leq 13$: we will show that, for most partitions $\mu$ of $N$, one has $\chi_{\mu}^{\lambda}\equiv\chi_{\widetilde{\mu}}^{\lambda}\pmod{q}$ for some partition $\widetilde{\mu}$ of $N$ that possesses a part so large that $\chi_{\widetilde{\mu}}^{\lambda}$ is forced to be zero for most partitions $\lambda$ of $N$. To that end, our first proposition, which is a quantification of an argument in~\cite{Morotti2020}, states that if the partition $\mu$ of $N$ has a large part, then for most partitions $\lambda$ of $N$ one has $\chi_\mu^{\lambda}=0$.  

\begin{proposition} \label{prop1}  Let $1\le A \le \log N/\log \log N$ be a real number.  Suppose that $\mu$ is a partition of $N$ such that the largest part of $\mu$ is 
\[
\ge \frac{\sqrt{6}}{2\pi} \sqrt{N} (\log N)  \Big( 1 +  \frac 1A\Big). 
\] 
Then the number of partitions $\lambda$ of $N$ with $\chi_{\mu}^{\lambda} \neq 0$ is at most 
\[ 
O\Big(p(N) \frac{\log N}{N^{\frac 1{2A}}}\Big). 
\] 
\end{proposition}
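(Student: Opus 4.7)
The plan is to combine the Murnaghan--Nakayama rule with the Hardy--Ramanujan asymptotic for $p(N)$. Let $k$ denote the largest part of $\mu$. By Murnaghan--Nakayama, $\chi_\mu^\lambda$ is a signed sum, over removable $k$-rim hooks $\xi$ in $\lambda$, of characters $\chi_{\mu\setminus k}^{\lambda\setminus\xi}$; in particular $\chi_\mu^\lambda\neq 0$ forces $\lambda$ to admit at least one removable $k$-rim hook. This reduces matters to the combinatorial question of counting partitions $\lambda\vdash N$ that possess a removable $k$-rim hook for the prescribed large $k$.

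For the counting, I would use the standard bijection between pairs $(\lambda,\xi)$ with $\xi$ a removable $k$-rim hook of $\lambda\vdash N$ and pairs $(\lambda',\xi')$ with $\xi'$ an addable $k$-rim hook to $\lambda'\vdash N-k$, to obtain
\[
\#\{\lambda\vdash N:\chi_\mu^\lambda\neq 0\}\;\leq\;\sum_{\lambda'\vdash N-k}\#\{\text{addable }k\text{-rim hooks to }\lambda'\}.
\]
Via the $k$-quotient $(\lambda'^{(0)},\ldots,\lambda'^{(k-1)})$ of $\lambda'$, addable $k$-rim hooks of $\lambda'$ are in bijection with addable cells of the quotient components, and the number of addable cells of a partition $\nu$ equals $d(\nu)+1$, where $d(\nu)$ denotes the number of distinct parts of $\nu$. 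The count is therefore $k+\sum_{i=0}^{k-1}d(\lambda'^{(i)})$. Since $d(\nu)\leq|\nu|$ and $\sum_i|\lambda'^{(i)}|\leq|\lambda'|/k=(N-k)/k$, this is at most $k+(N-k)/k=O(k)$ uniformly in $\lambda'\vdash N-k$, because the hypothesis forces $k\gtrsim\sqrt{N}\log N$ and hence $N/k\ll k$.

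Finally, the Hardy--Ramanujan asymptotic yields
\[
\frac{p(N-k)}{p(N)}\;=\;\exp\!\Big(\!-\frac{\pi k}{\sqrt{6N}}+o(1)\Big)
\]
uniformly for $k=O(\sqrt{N}\log N)$, the second-order correction $O(k^2/N^{3/2})$ being $o(1)$ in this range. Plugging in the lower bound $k\geq\frac{\sqrt{6}}{2\pi}\sqrt{N}\log N(1+1/A)$ gives $p(N-k)/p(N)\ll N^{-1/2-1/(2A)}$, and combining everything yields
\[
\#\{\lambda:\chi_\mu^\lambda\neq 0\}\;\ll\;k\cdot p(N-k)\;\ll\;\sqrt{N}\log N\cdot p(N)\cdot N^{-1/2-1/(2A)}\;=\;p(N)\log N\cdot N^{-1/(2A)},
\]
as required. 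The main subtlety is the careful calibration of constants: the coefficient $\sqrt{6}/(2\pi)$ in the hypothesis is chosen precisely so that the Hardy--Ramanujan exponential decay $\exp(-\pi k/\sqrt{6N})$ converts the extra factor $\log N(1+1/A)$ in $k$ into the power $N^{-(1+1/A)/2}$, and one must ensure that the lower-order Hardy--Ramanujan errors remain negligible uniformly in the marginal regime $k\sim\sqrt{N}\log N$ (with the regime $k\gg\sqrt{N}\log N$ being easier, as $p(N-k)/p(N)$ is then even smaller).
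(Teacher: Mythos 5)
Your proposal is correct and follows essentially the same route as the paper: the Murnaghan--Nakayama rule forces $\lambda$ to have a removable hook of length equal to the largest part $k$ of $\mu$ (i.e.\ to not be a $k$-core), such $\lambda$ are counted by $O(k)\,p(N-k)$, and the Hardy--Ramanujan asymptotic converts the hypothesis on $k$ into the stated saving. The only difference is that where the paper cites Lemma 5 of Morotti for the bound $(t+1)p(N-t)$ on non-$t$-cores, you reprove that bound from scratch via the removable/addable rim-hook bijection and the $k$-quotient, which makes the argument self-contained but is not a genuinely different method.
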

Erd{\H o}s and Lehner~\cite{ErdosLehner1941} showed that a random partition of $N$ has largest part of size $\frac{\sqrt{6}}{2\pi} \sqrt{N} \log N + O(\sqrt{N})$, so that the partitions $\mu$ considered in Proposition \ref{prop1} are just a little bit atypical.

We will, as in~\cite{Peluse2020}, use repeated applications of the following lemma to move from our original partition $\mu$ to the partition $\widetilde{\mu}$ that we aim to show has a large part.
\begin{lemma}\label{lem1}   Let $q$ be a prime. Suppose $\mu$ is a partition of $N$, and that $\nu$ is another partition of $N$ 
obtained from $\mu$ by replacing $q$ parts of the same size $m$ by one part of size $qm$.  Then for all partitions 
$\lambda$ of $N$ we have 
\[ 
\chi_{\mu}^{\lambda} \equiv \chi_{\nu}^{\lambda} \pmod q.
\] 
\end{lemma}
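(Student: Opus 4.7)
The plan is to deduce the lemma from the Frobenius character formula combined with the congruence $p_m^p \equiv p_{pm} \pmod p$ in the ring $\Lambda_{\mathbb Z}$ of symmetric functions with integer coefficients. Recall that, writing $p_\mu = \prod_i p_{\mu_i}$ for the power-sum symmetric functions and $s_\lambda$ for the Schur functions, the Frobenius formula reads
$$
p_\mu = \sum_{\lambda \vdash N} \chi_\mu^\lambda\, s_\lambda.
$$
Since the Schur functions $\{s_\lambda\}_{\lambda \vdash N}$ form a $\mathbb Z$-basis for the degree-$N$ piece of $\Lambda_{\mathbb Z}$, a congruence mod $p$ between two degree-$N$ symmetric functions descends coefficient-by-coefficient to a congruence mod $p$ between their Schur coefficients.

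The one substantive ingredient is a ``freshman's dream'' calculation. Expanding by the multinomial theorem,
$$
p_m^p \;=\; \Big(\sum_i x_i^m\Big)^p \;=\; \sum_{k_1 + k_2 + \cdots \,=\, p} \binom{p}{k_1, k_2, \ldots} x_1^{k_1 m} x_2^{k_2 m}\cdots,
$$
and the multinomial coefficient is divisible by $p$ unless some $k_i = p$ and all other $k_j$ vanish. The surviving terms sum to $\sum_i x_i^{pm} = p_{pm}$, giving the congruence $p_m^p \equiv p_{pm} \pmod{p}$ in $\Lambda_{\mathbb Z}$.

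To finish, let $\mu'$ denote the partition of $N - pm$ obtained by deleting the $p$ parts of size $m$ from $\mu$, so that $p_\mu = p_m^p \cdot p_{\mu'}$ and $p_\nu = p_{pm} \cdot p_{\mu'}$. Multiplying the congruence above by the integer-coefficient symmetric function $p_{\mu'}$ yields $p_\mu \equiv p_\nu \pmod p$ in $\Lambda_{\mathbb Z}$, and equating Schur coefficients then yields $\chi_\mu^\lambda \equiv \chi_\nu^\lambda \pmod p$ for every $\lambda \vdash N$, as desired. The only genuine content is the congruence $p_m^p \equiv p_{pm} \pmod p$; once that is in hand the rest is a short formal manipulation, so no real obstacle is anticipated.
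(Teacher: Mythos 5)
Your proof is correct and follows exactly the route the paper indicates: it does not prove Lemma~\ref{lem1} itself but notes that it is "a simple consequence of Frobenius's formula" and cites Odlyzko--Rains and Miller, and your argument via $p_\mu=\sum_\lambda \chi_\mu^\lambda s_\lambda$, the $\mathbb{Z}$-basis property of Schur functions, and the multinomial congruence $p_m^p\equiv p_{pm}\pmod p$ is precisely that standard deduction, carried out correctly.
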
 

This is a simple consequence of Frobenius's formula for computing character values, see for example Section 3 of~\cite{OdlyzkoRains2000}, or Proposition 1 of~\cite{Miller2019}.  

Our second proposition says that, for a typical partition $\mu$, the partition $\widetilde{\mu}$ obtained by repeatedly applying the procedure described in Lemma~\ref{lem1} until no part appears more than $q-1$ times has a part significantly larger than $\frac{\sqrt{6}}{2\pi} \sqrt{N} \log N$.
\begin{proposition} \label{prop2}  Let $q \le (\log N)/(\log \log N)^2$ be a prime.
Starting with a partition $\mu$ of $N$, we repeatedly replace every occurrence of $q$ parts of the same size $m$ by one part of size $qm$ until we arrive 
at a partition $\widetilde{\mu}$ where no part appears more than $q-1$ times.  Then the largest part of $\widetilde{\mu}$ exceeds 
\[ 
\frac{\sqrt{6}}{2\pi} \sqrt{N}  (\log N) \Big(1 +\frac{1}{5q} \Big),
\]
except for at most 
\[
O\Big(p(N) \exp\Big(- N^{\frac{1}{15q}} \Big)\Big)
\]
 partitions $\mu$. 
\end{proposition}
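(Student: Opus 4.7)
The plan is to use a Fristedt-style representation of a uniform random partition $\mu$ of $N$: the multiplicities $(r_m)_{m \ge 1}$, after conditioning on $\sum_m m r_m = N$, may be modeled as independent geometric random variables with $\Pr(r_m = j) = (1-q^m)q^{mj}$, where $q = e^{-\pi/\sqrt{6N}}$. Throughout, set $T := (1 + 1/(5p))\frac{\sqrt{6}}{2\pi}\sqrt{N}\log N$.

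For each $m$ coprime to $p$, define the chain sum $T_m := \sum_{j \ge 0} p^j r_{p^j m}$. This quantity is invariant under the combining operation of Lemma~\ref{lem1}; indeed, the multiplicity of $p^k m$ in $\widetilde{\mu}$ is precisely the $k$-th base-$p$ digit of $T_m$. Consequently, the largest part of $\widetilde{\mu}$ contained in the chain $\{m, pm, p^2 m, \ldots\}$ equals $L_m := p^{K_m} m$, where $K_m := \lfloor \log_p T_m \rfloor$, and the largest part of $\widetilde{\mu}$ overall is $\max_m L_m$. Since distinct chains depend on disjoint subsets of the $r_n$'s, the variables $L_m$ are mutually independent under the Fristedt model.

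Let $p^{k_m^{\ast}}$ denote the smallest power of $p$ strictly exceeding $T/m$, so that $L_m > T$ holds exactly when $T_m \ge p^{k_m^{\ast}}$. Independence then gives
\[
\Pr(\max_m L_m \le T) = \prod_m \Pr(L_m \le T) \le \exp\biggl(-\sum_m \Pr(T_m \ge p^{k_m^{\ast}})\biggr),
\]
and it remains to show that $\sum_m \Pr(T_m \ge p^{k_m^{\ast}}) \ge N^{1/(15p)}$; combined with standard comparisons between the Fristedt and uniform measures, this gives the claimed exception count, with the $p(N)$ prefactor arising in the conversion back to a count of partitions.

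The hard part will be the lower bound on $\sum_m \Pr(T_m \ge p^{k_m^{\ast}})$. Since $T_m$ is a sum of roughly $\log_p(\sqrt{N}/m)$ independent scaled geometrics with approximately equal means of order $\sqrt{6N}/(m\pi)$, Chernoff-type estimates are available. The strategy is to work in a carefully chosen range of $m$ (e.g.\ $m$ near scales of the form $T/p^c$ for small $c$) where $\mathbb{E}[T_m]$ is comparable to $p^{k_m^{\ast}}$, yielding a non-negligible per-chain probability, and then to sum over the many admissible $m$'s. The specific exponents $1/(5p)$ in the threshold shift and $1/(15p)$ in the exception count arise from optimising parameters in this balance between the per-chain probability and the count of useful chains.
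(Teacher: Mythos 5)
Your setup is, in different language, exactly the paper's: the independent-geometric (Fristedt) model at $q=e^{-\pi/\sqrt{6N}}$ is the paper's saddle-point bound \eqref{4.2} with $q=e^{-1/x}$, $x=\sqrt{6N}/\pi$; the chain sums $T_m$, the base-$p$ digit description of the multiplicities in $\widetilde\mu$, the independence of distinct chains, the bound $\prod_m(1-\Pr(\cdot))\le\exp(-\sum_m\Pr(\cdot))$, and the conversion back to a count of partitions at the cost of a factor $q^{-N}\prod_n(1-q^n)^{-1}\ll N^{3/4}p(N)$ all appear there. So your reduction to the lower bound $\sum_m\Pr(T_m\ge p^{k_m^*})\gg N^{1/(15p)}$ is correct and is the same reduction the paper makes.

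The gap is that this lower bound --- which is where essentially all of the work in Proposition~\ref{prop2} lies --- is not proved, and the route you sketch for it is misdirected. For every prime $p\ge 3$ there is \emph{no} range of $m$ in which $\mathbb{E}[T_m]$ reaches the threshold: one has $\mathbb{E}[T_m]\le \frac{x}{m}\bigl(\frac{\log(x/m)}{\log p}+O(1)\bigr)$ while $p^{k_m^*}\ge T/m=\frac{x}{m}(\log x)\bigl(1+\frac{1}{5p}\bigr)$, so the mean falls short of the threshold by a factor of order $\log p$ (of order $\log\log N$ when $p$ is near $(\log N)/(\log\log N)^2$), which is $\gg\sqrt{\log N}$ standard deviations of $T_m$. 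Hence the per-chain probability is polynomially small in $N$ for \emph{every} chain; there is no scale with a ``non-negligible per-chain probability,'' and the entire content of the proposition is that the sum of these polynomially small probabilities over the $\asymp N^{1/2-1/(2ep)}$ relevant chains still exceeds $N^{1/(15p)}$, uniformly in $p$. Moreover, Chernoff-type estimates bound tails from above, whereas you need a \emph{lower} bound on the upper-tail probability $\Pr(T_m\ge p^{k_m^*})$. The paper gets this by writing the tail exactly as $\sum_{\ell\ge p^r}\widetilde p(\ell)e^{-\ell m/x}/F_p(x/m)$, where $\widetilde p(\ell)$ counts partitions of $\ell$ into powers of $p$, and then invoking the uniform-in-$p$ estimates $\widetilde p(p^r)\ge p^{r(r-1)/2}/(r-1)^{r-1}$ and $\log F_p(y)\le\frac{(\log y)^2}{2\log p}+\frac12\log y+\frac18\log p+O(1)$ (Lemmas~\ref{lem3} and~\ref{lem2}), together with the precise choice $r=\lfloor\log N/(2ep)\rfloor$ that makes $\widetilde p(p^r)/F_p$ overcome the factor $e^{-p^r m/x}\approx N^{-1/2-1/(10p)}$. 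Nothing playing the role of these estimates, or of the parameter optimization that produces the exponents $1/(5p)$ and $1/(15p)$, appears in your proposal, so what you have is a correct reduction followed by an unproven claim whose suggested justification would fail for $p\ge 3$.
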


If we consider only partitions of $N$ where no part appears more than $q-1$ times, then a small variation of the Erd{\H o}s--Lehner argument 
shows that such partitions typically have a largest part of size about $\frac{\sqrt{6N}}{2\pi} \frac{\sqrt{q}}{\sqrt{q-1}} \log N$.  
This suggests why a result like Proposition~\ref{prop2} may be expected.  However, some care is needed, since the partitions $\widetilde{\mu}$ 
that are the result of our procedure may not look like a typical partition with no part appearing more than $q-1$ times (for example, the largest part in $\widetilde{\mu}$ 
will very likely be a multiple of $q$).  

Theorem~\ref{thm1} is now a straightforward consequence of Propositions~\ref{prop1} and~\ref{prop2}, which we will prove in Sections~\ref{sec3} and~\ref{sec5}, respectively.
\begin{proof}[Deducing Theorem~\ref{thm1}]  We are given a prime $q\le (\log N)/(\log \log N)^2$, and wish to bound the number of 
partitions $\lambda$, $\mu$ with $\chi_{\mu}^{\lambda} \not\equiv 0 \pmod q$.   Let ${\widetilde \mu}$ be as in Proposition~\ref{prop2}.  
If the largest part of $\widetilde \mu$ is below $\frac{\sqrt{6N}}{2\pi} (\log N) (1+\frac{1}{5q} )$ then Proposition~\ref{prop2} tells us that 
there are at most 
\[ 
O\Big( p(N) \exp(-N^{\frac{1}{15q}}) \times p(N) \Big)= O\Big( p(N)^2 \exp (-N^{\frac{1}{15q}}) \Big)
\] 
choices for $\mu$ and $\lambda$.  

On the other hand, if the largest part of $\widetilde \mu$ exceeds 
$\frac{\sqrt{6N}}{2\pi} (\log N) (1+ \frac{1}{5q})$, then by Proposition~\ref{prop1} $\chi_{\widetilde \mu}^{\lambda} \neq 0$ for at most 
$O(p(N) (\log N) N^{-\frac{1}{10q}})$ partitions $\lambda$.  Thus, in this situation, since $\chi_\mu^\lambda \equiv \chi_{\widetilde \mu}^\lambda \pmod q$ by Lemma~\ref{lem1}, the number of partitions $\mu$ and $\lambda$ with $\chi_{\mu}^{\lambda} \not \equiv 0 \pmod q$ is at most 
\[ 
O\Big(p(N) \times p(N) \frac{\log N}{N^{\frac{1}{10q}}} \Big)= O\Big( p(N)^2 N^{-\frac{1}{12q}}\Big). 
\] 
Combining this (which is the bottleneck to improving Theorem~\ref{thm1} quantitatively) with our earlier bound, we conclude that there are at most 
\[ 
O\Big( p(N)^2 N^{-\frac{1}{12q}} \Big) 
\] 
pairs $\mu$, $\lambda$ with $q\nmid \chi_{\mu}^{\lambda}$. This establishes the first assertion of the theorem, and the second assertion follows upon summing this bound over all $q\le (\log N)/(\log \log N)^2$.  
\end{proof} 

\section{Proof of Proposition~\ref{prop1}}\label{sec3}

To prove Proposition~\ref{prop1}, we will need the notion of a $t$-core partition. For any box $b$ in the Young diagram of a partition, the \textit{hook-length} of $b$ is $1$ plus the number of boxes directly to the right of $b$ plus the number of boxes directly below $b$. For example, the Young diagram of $\lambda=(4,2,1)$ below has each box labeled with its hook-length.
\begin{center}
	\begin{figure}[h]
		\ytableausetup{mathmode}
		\begin{ytableau}
		6 & 4 & 2 & 1 \\
		3 & 1 \\
                1 \\
		\end{ytableau}
		\caption{Hook-lengths for $\lambda=(4,2,1)$.}
                \label{fig1}
	\end{figure}
\end{center}
A partition is called a \textit{$t$-core} if none of the hook lengths of its Young diagram are divisible by $t$. For example, from Figure~\ref{fig1} one can see that $(4,2,1)$ is a $5$-core.

\begin{proof}[Proof of Proposition~\ref{prop1}]
Let $t$ denote the largest part of $\mu$, so that $t\ge \frac{\sqrt{6N}}{2\pi} (\log N) (1+ 1/A)$ by assumption.  If the partition $\lambda$ is a $t$-core, then it follows from the Murnaghan--Nakayama rule (see Subsection 4.3 of~\cite{FultonHarris1991}) that $\chi_{\mu}^{\lambda}=0$. Now, from Lemma 5 of~\cite{Morotti2020}, we know that there are at most $(t+1) p(N-t)$ partitions $\lambda$ that are not $t$-cores.  Therefore, the number of partitions $\lambda$ with $\chi_{\mu}^{\lambda} \neq 0$ is at most 
\[
(t+1) p(N-t) \ll \frac{t+1}{N-t+1} \exp\Big( \frac{2\pi}{\sqrt{6}} \sqrt{N-t} \Big) \le 
\frac{t+1}{N-t+1} \exp\Big( \frac{2\pi}{\sqrt{6}} \sqrt{N} - \frac{\pi t}{\sqrt{6N}}\Big),
\]
where in the first inequality we have used the famous Hardy--Ramanujan asymptotic formula
$$ 
p(N) \sim \frac{1}{4N\sqrt{3}} \exp\Big( \frac{2\pi}{\sqrt{6}} \sqrt{N}\Big)
$$ 
(see \cite{Rademacher1937} for even more precise asymptotics).

 For $N\ge t\ge \frac{\sqrt{6N}}{2\pi} (\log N) (1+ 1/A)$, the right side above is maximized at the lower end point 
$t =  \frac{\sqrt{6N}}{2\pi} (\log N)(1+1/A)$, yielding the bound 
\[ 
\ll \frac{\sqrt{N}\log N}{N} N^{-\frac 12(1+\frac 1A)} \exp\Big(  \frac{2\pi}{\sqrt{6}} \sqrt{N} \Big) \ll p(N) \frac{\log N}{N^{\frac 1{2A}}}.  
\]
This establishes Proposition~\ref{prop1}.
\end{proof}

This may be a convenient juncture to elaborate on the comments at the end of our Introduction on variations of our problem.  
If the representation corresponding to $\lambda$ is chosen at random (with the uniform measure on all irreducible representations), then we have seen that 
$\lambda$ almost surely a $t$-core if $t \ge \frac{\sqrt {6N}}{2\pi} (\log N) (1+1/A)$.   A random element (chosen uniformly) $\sigma$ of the group $S_N$  
will have a cycle of length $\ge N/\log N$ with very high probability.  This is the basis of Miller's result~\cite{Miller2014} that $\chi^{\lambda}(\sigma) =0$ 
almost always.  

If the representation corresponding to $\lambda$ is chosen with the Plancherel measure, then from the work of Vershik and Kerov~\cite{VK} it 
follows that almost surely the largest part of $\lambda$ and the total number of parts in $\lambda$ are $\sim 2\sqrt{N}$, so that the maximal 
possible hook length is $\le (4+\epsilon) \sqrt{N}$.  On the other hand, by Erd{\H o}s--Lehner the largest part of a typical partition $\mu$ is 
about $\frac{\sqrt{6N}}{2\pi} \log N$.   It follows that if $\lambda$ is chosen randomly according to the Plancherel measure and the 
conjugacy class corresponding to $\mu$ is chosen uniformly, then $\chi_{\mu}^{\lambda} =0$ almost always.

\section{Preliminaries for the proof of Proposition~\ref{prop2}}\label{sec4}

Let ${\widetilde p}(j)$ denote the number of partitions of $j\geq 0$ into powers of $q$, with the convention that ${\widetilde p}(0)=1$. We define the generating function $F_q(x)$ of $\widetilde{p}(j)$ by 
\[ 
F_q(x) := \sum_{j=0}^{\infty} {\widetilde p}(j) e^{-j/x} = \prod_{j=0}^{\infty} (1- e^{-q^j/x})^{-1}
\] 
for a real number $x>0$.   Both ${\widetilde p}(j)$ and the generating function $F_q(x)$ have been studied extensively for fixed primes $q$, beginning with work of Mahler~\cite{Mahler1940} and 
de Bruijn~\cite{deBruijn1948}.  In our work we need only some simple estimates for these objects, but with uniformity in $q$.  

\begin{lemma} \label{lem2}  In the range $0 < x\le 1$, we have $F_q(x) = O(1)$.  When $x\ge 1$ we have
\begin{equation} 
\label{1} 
\frac{(\log x)^2}{2\log q} + \frac 12 \log x + O(1) \le \log F_q(x) \le \frac{(\log x)^2}{2\log q} + \frac 12 \log x+ \frac 18 \log q +O(1). 
\end{equation} 
\end{lemma}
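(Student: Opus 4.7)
The plan is to take logarithms and estimate
\begin{equation*}
\log F_p(x) = -\sum_{j=0}^{\infty} \log(1-e^{-p^j/x})
\end{equation*}
term by term, treating separately the indices for which $u_j := p^j/x$ is at most $1$ and those for which it exceeds $1$. For $0<x\le 1$ every $u_j\ge p^j\ge 1$, and the elementary inequality $-\log(1-e^{-u})\le 2e^{-u}$ valid for $u\ge 1$ gives
\begin{equation*}
\log F_p(x) \le 2\sum_{j\ge 0} e^{-p^j} \le 2\sum_{j\ge 0} e^{-2^j} = O(1),
\end{equation*}
uniformly in $p\ge 2$, which handles the first assertion.

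For $x\ge 1$ I would introduce $J:=\lfloor \log x/\log p\rfloor$ and write $\log x=(J+\theta)\log p$ with $\theta\in[0,1)$, so that $u_j\le 1$ exactly when $j\le J$. The tail $\sum_{j>J}-\log(1-e^{-p^j/x})$ is $O(1)$ by the same $2e^{-u}$ bound, since $p^{J+1}/x\ge 1$ and subsequent terms decay at least as fast as $e^{-p^{J+1}/x \cdot p^{j-J-1}}$. For the head $j\le J$ I would apply the uniform Taylor expansion
\begin{equation*}
-\log(1-e^{-u}) = -\log u + \frac{u}{2} + O(u^2), \qquad u\in(0,1],
\end{equation*}
to decompose
\begin{equation*}
\sum_{j=0}^J -\log(1-e^{-p^j/x}) = \sum_{j=0}^J \log(x/p^j) + \frac{1}{2x}\sum_{j=0}^J p^j + O\Big(\frac{1}{x^2}\sum_{j=0}^J p^{2j}\Big).
\end{equation*}
The two remainder sums are geometric with ratios $p$ and $p^2$ and largest terms $p^J/x\le 1$ and $(p^J/x)^2\le 1$ respectively, so they contribute $O(p/(p-1))$ and $O(p^2/(p^2-1))$, both $O(1)$ uniformly in $p\ge 2$.

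The crux is the identity
\begin{equation*}
\sum_{j=0}^J \log(x/p^j) = (J+1)\log x - \frac{J(J+1)}{2}\log p = \frac{(\log x)^2}{2\log p} + \frac{\log x}{2} + \frac{\log p}{2}\,\theta(1-\theta),
\end{equation*}
which is a short algebraic manipulation after substituting $\log x=(J+\theta)\log p$. Since $\theta(1-\theta)\in[0,1/4]$ on $[0,1)$ with maximum $1/4$ attained at $\theta=1/2$, the final summand lies in $[0,(\log p)/8]$, and this is precisely what produces the $(\log p)/8$ gap between the lower and upper bounds in \eqref{1}. Combining everything yields the claimed estimates. The main point of vigilance is uniformity in $p$: every ``$O(1)$'' has to be independent of $p$, which it is because each geometric error reduces to a bounded rational function of $p$, and the $(\log p)/8$ slack is genuinely unavoidable, reflecting the fact that $\log x/\log p$ need not be an integer.
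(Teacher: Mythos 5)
Your proof is correct and follows essentially the same route as the paper: bound the factors with $p^j > x$ by the absolute constant $\prod_{j\ge 0}(1-e^{-2^j})^{-1}$, and expand the factors with $p^j \le x$ as $\log(x/p^j)$ plus geometrically summable errors. The only (welcome) refinement is that you make the fractional-part computation explicit, exhibiting the exact term $\tfrac{\log p}{2}\,\theta(1-\theta)\in[0,\tfrac18\log p]$ where the paper simply states that the estimates ``follow at once'' from the floor-function expression.
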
 
\begin{proof} When $x\le 1$, note that $F_q(x) \leq\prod_{j=0}^{\infty} (1- e^{-q^j})^{-1}\leq\prod_{j=0}^{\infty} (1- e^{-2^j})^{-1}$, so that $F_q(x)=O(1)$.  Now suppose $x \ge 1$.  Note that 
 the terms in the product $\prod_{j=0}^{\infty} (1- e^{-q^j/x})^{-1}$ with $q^j > x$ multiply out to a quantity bounded by $\prod_{j=0}^{\infty} (1- e^{-q^j})^{-1}$, so that they are bounded by an absolute constant.  For the terms with $q^j\le x$, 
 note that $\log (1-e^{-q^j/x})^{-1} = \log (x/q^j) + O(q^j/x)$, so that 
 \begin{align*}
 \log  \prod_{j=0}^{\infty} (1- e^{-q^j/x})^{-1} &= \sum_{0\le j\le \log x/\log q} \log\frac{x}{q^j}  + O(1) \\
& = (\log x) \Big(1+\Big\lfloor \frac{\log x}{\log q} \Big\rfloor\Big) - \frac{\log q}{2} \Big\lfloor \frac{\log x}{\log q} \Big\rfloor\Big(1+\Big\lfloor \frac{\log x}{\log q} \Big\rfloor\Big) +O(1).\\
\end{align*}
 The estimates in \eqref{1} follow at once.  
\end{proof} 

In the second lemma of this section, we will record some basic properties of $\widetilde{p}$.
\begin{lemma} \label{lem3}  The function ${\widetilde p}(k)$ is monotone non-decreasing in $k$.   For all $r\ge  2$ we have 
\[ 
{\widetilde p}(q^r) \ge \frac{q^{r(r-1)/2}}{(r-1)^{r-1}}. 
\] 
\end{lemma}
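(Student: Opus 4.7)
The plan is to handle the two assertions separately, both by simple counting/injection arguments.

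For \emph{monotonicity}, I would give an injection from partitions of $k$ into powers of $p$ to partitions of $k+1$ into powers of $p$: given any partition of $k$ into powers of $p$, append an extra part equal to $1 = p^0$. This produces a distinct partition of $k+1$ whose parts are still powers of $p$, so $\widetilde p(k)\le \widetilde p(k+1)$.

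For the \emph{lower bound}, I would count a restricted family of partitions of $p^r$ using only the parts $1,p,p^2,\ldots,p^{r-1}$. Parameterize such a partition by the multiplicity vector $(a_0,a_1,\ldots,a_{r-1})$, where $a_i$ is the number of parts equal to $p^i$, subject to $a_0+a_1p+\cdots+a_{r-1}p^{r-1}=p^r$. Since $a_0$ is determined by the other $a_i$'s, it suffices to choose $(a_1,\ldots,a_{r-1})\in \mathbb{Z}_{\ge 0}^{r-1}$ satisfying $a_1 p+\cdots+a_{r-1}p^{r-1}\le p^r$, and each such choice gives a distinct partition of $p^r$ into powers of $p$.

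To produce many such tuples, I would impose the sufficient condition $a_i\le p^{r-i}/(r-1)$ for $1\le i\le r-1$, which forces $\sum_{i=1}^{r-1} a_ip^i\le (r-1)\cdot p^r/(r-1)=p^r$. The number of admissible tuples is then
\[
\prod_{i=1}^{r-1}\Bigl(1+\bigl\lfloor p^{r-i}/(r-1)\bigr\rfloor\Bigr)\ge \prod_{i=1}^{r-1}\frac{p^{r-i}}{r-1}=\frac{p^{\,(r-1)+(r-2)+\cdots+1}}{(r-1)^{r-1}}=\frac{p^{r(r-1)/2}}{(r-1)^{r-1}},
\]
using the elementary inequality $1+\lfloor x\rfloor\ge x$ valid for all real $x\ge 0$ (whether or not $x\ge 1$). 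This yields the claimed bound.

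There is no real obstacle here; the only choice to be made is how to define the restriction on the $a_i$. Splitting the budget $p^r$ evenly into $r-1$ pieces of size $p^r/(r-1)$, and then distributing each piece among the possible values of $a_i$, gives a product that collapses telescopically to exactly $p^{r(r-1)/2}/(r-1)^{r-1}$, matching the statement of the lemma.
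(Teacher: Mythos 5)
Your proof is correct and follows essentially the same route as the paper: monotonicity by appending a part equal to $1$, and the lower bound by counting multiplicity vectors with $a_i\le p^{r-i}/(r-1)$ for $1\le i\le r-1$ and letting the number of $1$'s absorb the remainder. The only cosmetic difference is that you spell out the floor-function step $1+\lfloor x\rfloor\ge x$, which the paper leaves implicit.
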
 
\begin{proof}  Appending $1$ to a partition of $k$ into powers of $q$ produces a partition of $k+1$ into powers of $q$.  This shows that 
${\widetilde p}(k)$ is monotone non-decreasing in $k$.  
 
Suppose $r\ge 2$.  For each $1\le j\le r-1$, pick a non-negative integer $k_j$ with 
$0\le k_j \le q^{r-j}/(r-1)$.  Each choice for the $k_j$'s gives a partition counted in ${\widetilde p}(q^{r})$ by using $k_j$ 
copies of $q^j$, and then using $q^r - \sum_{j=1}^{r-1} k_j q^{j}$ copies of $1$.  Therefore 
\[ 
{\widetilde p}(q^r) \ge \prod_{j=1}^{r-1} \frac{q^{r-j}}{(r-1)} =\frac{q^{r(r-1)/2}}{(r-1)^{(r-1)}}, 
\] 
as desired.
\end{proof} 

\section{Proof of Proposition~\ref{prop2}}\label{sec5}

Let us analyze the process of transforming a partition $\mu$ to a partition ${\widetilde \mu}$ as in Proposition~\ref{prop2}. Consider an integer $k$ coprime to $q$, and all parts in $\mu$ of the form $kq^j$ with $j\ge 0$.  If these parts sum to $k \ell$, then 
in the partition ${\widetilde \mu}$ we would have corresponding parts of the form $kq^j$ also summing to $k\ell$ with the additional 
property that no part appears more than $q-1$ times.  But this simply means that the number of parts of size $kq^j$ in $\widetilde{\mu}$ equals the coefficient (or `digit') of $q^j$ in the base $q$ expansion of $\ell$.  In particular, if $\ell \ge q^r$, then 
the partition $\widetilde \mu$ must have a part $kq^j$ with some $j\ge r$ (since $\ell$ must have more than $r$ digits in base $q$).

Next, suppose ${\widetilde \mu}$ has parts of the form $kq^j$ summing to $k\ell$ with no part appearing more than $q-1$ times.  From 
how many partitions $\mu$ could this $\widetilde \mu$ have arisen?  Restricting our attention to these parts of the form $kq^j$, note that $\mu$ could 
have had any collection of parts $kq^j$ that sum to $k\ell$; or in other words there are ${\widetilde p}(\ell)$ (the number of partitions of 
$\ell$ into powers of $q$) choices for parts of the form $kq^j$ in ${\mu}$.  

Let ${\mathcal K}$ be a set of integers $k$ with $(k,q)=1$.  We wish to count the number of partitions $\mu$ such that 
for $k\in {\mathcal K}$ the parts of the form $kq^j$ in $\mu$ sum to $k\ell$ with $\ell < q^r$; call this quantity $p(N;{\mathcal K}, r)$.  In other words, these are the partitions 
$\mu$ for which $\widetilde \mu$ does not have a part $kq^j$ with $j\ge r$ for all $k\in {\mathcal K}$.  By our remarks above, the count of such partitions $\mu$ is 
the coefficient of $z^N$ in the generating function 
\begin{equation} 
\label{4.1} 
\prod_{\substack { (k,q)=1 \\ k \notin {\mathcal K}}} (1+  {\widetilde p}(1) z^k + {\widetilde p}(2) z^{2k} +\ldots ) \prod_{k\in {\mathcal K}}
 \Big( \sum_{\ell=0}^{q^r-1} {\widetilde p}(\ell) z^{k\ell} \Big) = 
 \prod_{\substack { (k,q)=1 \\ k \notin {\mathcal K}}} \prod_{j=0}^{\infty} (1-z^{kq^j})^{-1}\prod_{k\in {\mathcal K}}
 \Big( \sum_{\ell=0}^{q^r-1} {\widetilde p}(\ell) z^{k\ell} \Big) . 
\end{equation}
For example, if ${\mathcal K}=\emptyset$ then we are just counting all partitions of $N$, and the above generating function may be seen to be 
$\prod_{n=1}^{\infty} (1-z^n)^{-1}$.  

Since the coefficients in the expansion of the generating function \eqref{4.1} are all positive, for any $0< z<1$ we have 
\begin{align}
\label{4.2}  
p(N;{\mathcal K},r) &\le z^{-N}  \prod_{\substack { (k,q)=1 \\ k \notin {\mathcal K}}} \prod_{j=0}^{\infty} (1-z^{kq^j})^{-1}\prod_{k\in {\mathcal K}}
 \Big( \sum_{\ell=0}^{q^r-1} {\widetilde p}(\ell) z^{k\ell} \Big) \nonumber \\
& = \Big( z^{-N} \prod_{n=1}^{\infty} (1-z^n)^{-1}\Big) \prod_{k\in {\mathcal K}}
 \Big( \frac{\sum_{\ell=0}^{q^r-1} {\widetilde p}(\ell) z^{k\ell} }{\sum_{\ell=0}^{\infty} {\widetilde p}(\ell) z^{k\ell}} \Big).
\end{align}
Here we shall take $z= e^{-1/x}$ with $x= \sqrt{N}/\sqrt{\zeta(2)} = \sqrt{6N}/\pi$.  This choice of $z$ is motivated by the fact that the asymptotic for 
the unrestricted partitions $p(N)$ arises from a contour integral computation integrating $z$ over a circle with approximately this radius.  For this choice 
of $z$, one has 
\[ 
z^{-N} \prod_{n=1}^{\infty} (1-z^n)^{-1} \sim \exp\Big( 2\sqrt{N\zeta(2)} - \frac 12 \log (\sqrt{24N})\Big) \ll  N^{\frac 34} p(N)
\] 
(see Section VIII.6 of~\cite{FlajoletSedgewick2009}). Thus, with this choice of $x$, we have
\begin{equation} 
\label{4.3} 
p(N;{\mathcal K},r) \ll N^{\frac 34} p(N) \prod_{k \in {\mathcal K}} 
\Big( \frac{\sum_{\ell=0}^{q^r-1} {\widetilde p}(\ell) e^{-k\ell/x} }{\sum_{\ell=0}^{\infty} {\widetilde p}(\ell) e^{-k\ell/x}} \Big).
\end{equation}

We are now ready for the proof of Proposition~\ref{prop2}.
\begin{proof}[Proof of Proposition~\ref{prop2}]
We apply the above considerations, taking 
\[ 
r=\Big \lfloor \frac{\log N}{2eq}\Big \rfloor 
\] 
and ${\mathcal K}$ to be the set of integers $k\ge K$ with $(k,q)=1$ where 
\[ 
K = \frac{\sqrt{6N}}{2\pi q^r} (\log N) \Big( 1 + \frac{1}{5q}\Big). 
\] 
Then $p(N;{\mathcal K}, r)$ gives an upper bound for the number of partitions $\mu$ for which $\widetilde \mu$ has largest part below 
$Kq^r= \frac{\sqrt{6N}}{2\pi} (\log N) (1 + 1/(5q))$, which is the quantity we desire to bound.   Thus \eqref{4.3} furnishes here 
the upper bound 
\begin{equation} 
\label{4.4} 
\ll N^{\frac 34} p(N) \exp\Big( - \sum_{\substack{ k\ge K \\ (k,q)=1}} \sum_{\ell \ge q^r} \frac{{\widetilde p}(\ell) e^{-\ell k/x} }{F_q(x/k)}\Big) 
= N^{\frac 34} p(N) \exp( - \Delta), 
\end{equation} 
say, for this quantity.

It remains to show that $\Delta$ is suitably large.  Since ${\widetilde p}(\ell )\ge {\widetilde p}(q^r)$ for $\ell \ge q^r$, and $F_q(x/k) \le F_q(x/K) 
\le F_q(2q^r/\log {N})$ for $k\ge K$, we obtain 
\begin{align} 
\label{4.5}
\Delta \ge \frac{{\widetilde p}(q^r) }{F_q(2q^r/\log {N})} \sum_{\substack{k \ge K\\ (k,q)=1}} \sum_{\ell \ge q^r} e^{-\ell k/x}
&= \frac{{\widetilde p}(q^r) }{F_q(2q^r/\log {N})} \sum_{\substack{k \ge K\\ (k,q)=1}} \frac{e^{-q^r k/x}}{(1-e^{-k/x})} 
\nonumber \\
&\ge \frac{{\widetilde p}(q^r) }{F_q(2q^r/\log{N})} \sum_{\substack{k \ge K\\ (k,q)=1}} e^{-q^rk/x} \frac{x}{k}. 
\end{align} 
Restricting just to the terms $K \le k \le K+x/q^r (\le 2K)$ the sum over $k$ above is 
\begin{equation} 
\label{4.6} 
\ge \frac{x}{2K} e^{-Kq^r/x -1} \sum_{\substack{ K \le k \le K+x/q^r \\ (k,q)=1}} 1 \ge 
\frac{1}{20} \frac{x}{K} \frac{x}{q^r} e^{-Kq^r/x} 
\ge \Big( 20 N^{\frac{1}{10q}} \log N\Big)^{-1}
\end{equation} 
after a small calculation.   Further, by Lemma~\ref{lem2} we have 
\begin{align*}
\log F_q(2q^r/\log N) &\le \frac{1}{2\log q} \Big(\log \frac{q^r}{\log \sqrt{N}}\Big)^2 + \frac 12 \log \frac{q^r}{\log \sqrt{N}} + \frac 18 \log q + O(1) \\
&\le \frac{1}{2\log q} \Big(\log \frac{q^r}{\log \sqrt{N}}\Big)^2 + \frac r2 \log q + O(1),
\end{align*} 
and combining this with the lower bound of Lemma~\ref{lem3} we obtain 
\begin{align*}
\log \frac{\widetilde{p}(q^r)}{F_q(2q^r/\log N)} 
&\ge \frac{r^2}{2}\log q - \frac{1}{2\log q} \Big( \log \frac{q^r}{\log \sqrt{N}}\Big)^2 - r\log (qr) +O(1) \\ 
&= r\log \Big(\frac{\log \sqrt{N}}{qr}\Big) - \frac{1}{2\log q} (\log \log \sqrt{N})^2 +O(1).
\end{align*}
Using this and \eqref{4.6} in \eqref{4.5}, and recalling that $r= \lfloor (\log \sqrt{N})/(eq)\rfloor$, we conclude that 
\[ 
\Delta \gg N^{\frac{1}{2eq} - \frac{1}{10q}} \exp\Big( - \frac{1}{2\log q} (\log \log \sqrt{N})^2 -\log \log N\Big) \gg N^{\frac{1}{12q}}. 
\] 
Using this in \eqref{4.4}, we conclude that the number of partitions $\mu$ for which $\widetilde \mu$ has 
 largest part below  $\frac{\sqrt{6N}}{2\pi} (\log N) (1+ 1/(5q))$ is 
\[ 
 \ll p(N) N^{\frac 34} \exp( - \Delta) \le  p(N) \exp\big( - N^{\frac{1}{15q}}\big),  
\]
 establishing Proposition~\ref{prop2}. 
\end{proof} 
\bibliographystyle{plain}
\bibliography{bib}

\end{document}